\DeclareMathOperator{\SL}{SL}
\DeclareMathOperator{\id}{id}
\DeclareMathOperator{\N}{\mathbb{N}}
\DeclareMathOperator{\Z}{\mathbb{Z}}
\DeclareMathOperator{\R}{\mathbb{R}}
\DeclareMathOperator{\C}{\mathbb{C}}
\renewcommand{\H}{\mathbb{H}}
\newtheorem{Theorem}{Theorem}[section]
\newtheorem{Lemma}[Theorem]{Lemma}
\newtheorem{Corollary}[Theorem]{Corollary}
\theoremstyle{definition}
\author{Claudia Alfes-Neumann}
\address{Mathematical Institute, Paderborn University, Warburger Str. 100,
D-33098 Paderborn, Germany}
\email{alfes@math.uni-paderborn.de}
\address{Mathematical Institute, University of Cologne, Weyertal 86-90, D--50931 Cologne, Germany}
\author{Markus Schwagenscheidt}
\email{mschwage@math.uni-koeln.de}
\title{Identities of Cycle Integrals of Weak Maass Forms}
\thanks{The research of the third author is supported by the SFB-TRR 191 \lq Symplectic Structures in Geometry, Algebra and Dynamics\rq, funded by the DFG}
\begin{document} 

	\begin{abstract}
		We prove identities between cycle integrals of non-holomorphic modular forms arising from applications of various differential operators to weak Maass forms.
	\end{abstract}
	
	\maketitle

	\section{Introduction}
	
	While investigating the Shintani lift of weakly holomorphic modular forms, Bringmann, Guerzhoy and Kane \cite{bgk1,bgk2} found surprising identities between cycle integrals of weakly holomorphic modular forms and cusp forms of the same weight. More precisely, they showed that if $f$ is a weakly holomorphic modular form of weight $2k \in 2\N$ for $\Gamma=\SL_{2}(\Z)$ which is orthogonal to cusp forms and whose constant term in the Fourier expansion vanishes, then there exists a cusp form $g$ of weight $2k$ such that the identity
	\begin{align}\label{eq bgk formula 1}
	\int_{C_{Q}}f(z)Q(z,1)^{k-1}dz =-\frac{(2k-2)!}{(4\pi)^{2k-1}}\overline{\int_{C_{Q}}g(z)Q(z,1)^{k-1}dz}
	\end{align}
	of cycle integrals holds for any integral binary quadratic form $Q$ of positive non-square discriminant, where $C_{Q}$ is the geodesic associated to $Q$, see Section~\ref{section cycle integrals}. Although the above identity only involves (weakly) holomorphic modular forms, the proof crucially uses the theory of harmonic weak Maass forms, which was first developed systematically by Bruinier and Funke \cite{bf}. Namely, by results of Bruinier, Ono and Rhoades \cite{bor}, for every weakly holomorphic modular form $f$ of weight $2k$ as above there exists a harmonic weak Maass form $F$ of weight $2-2k$ such that $f = \mathcal{D}^{2k-1}F$ and such that $g=\xi_{2-2k}F$ is a cusp form of weight $2k$, with the differential operators $\xi_{2-2k} = 2i y^{2-2k}\overline{\frac{\partial}{\partial \overline{z}}}$ and $\mathcal{D} = \frac{1}{2\pi i}\frac{\partial}{\partial z}$, $z = x+iy \in \H$. The authors of \cite{bgk2} actually showed that the identity
	\begin{align}\label{eq bgk formula}
	\int_{C_{Q}}(\mathcal{D}^{2k-1}F)(z)Q(z,1)^{k-1}dz = -\frac{(2k-2)!}{(4\pi)^{2k-1}}\overline{\int_{C_{Q}}(\xi_{2-2k}F)(z)Q(z,1)^{k-1}dz}
	\end{align}
	holds for all binary quadratic forms $Q$ as above. This implies the formula \eqref{eq bgk formula 1}. For the proof of \eqref{eq bgk formula}, the authors of \cite{bgk2} defined (regularized) periods of weakly holomorphic modular forms and generalized identities between cycle integrals and periods of cusp forms proved by Kohnen and Zagier \cite{kohnenzagier} to weakly holomorphic modular forms. Jens Funke informed us that he has obtained another proof of the identity \eqref{eq bgk formula} by comparing the cohomology classes of $\mathcal{D}^{2k-1}F$ and $\xi_{2-2k}F$. 
	
	In the present note, we generalize the above identities by replacing $f$ and $g$ by non-holomorphic modular forms arising from applications of various differential operators to weak Maass forms. Furthermore, we prove our identities by direct calculations using explicit parametrizations of cycle integrals and commutation relations of differential operators, and thereby obtain refinements and shorter proofs of the above identities of Bringmann, Guerzhoy and Kane.
	
	To simplify the notation, we define the cycle integral along $C_{Q}$ of a smooth modular form $F$ of weight $2k \in 2\Z$ for $\Gamma$ by
	\begin{align}\label{eq cycle integral}
	\mathcal{C}(F,Q) = D^{\frac{1-k}{2}}\int_{C_{Q}}F(z)Q(z,1)^{k-1}dz.
	\end{align}
	We prove the following identities of cycle integrals.
	
	\begin{Theorem}\label{theorem identities}
		Let $F:\H \to \C$ be a smooth function which transforms like a modular form of weight $2-2k \in 2\Z$ for $\Gamma$. Then the identity
		\begin{align*}
		\mathcal{C}\left(L_{2-2k}F,Q \right) = \mathcal{C}\left(R_{2-2k}F,Q\right) = \overline{\mathcal{C}\left(\xi_{2-2k}F,Q \right)}
		\end{align*}
		of cycle integrals holds, where $L_{2-2k} = -2iy^{2}\frac{\partial}{\partial \overline{z}}$ and $R_{2-2k} = 2i\frac{\partial}{\partial z} + (2-2k)y^{-1}$ are the Maass lowering and raising operators.
		
		Moreover, if $F$ is a weak Maass form of weight $2-2k$ with eigenvalue $\lambda$, then we have
		\begin{align*}
		\mathcal{C}\left(R_{2-2k}^{k-\ell}F,Q\right) &= \big((k+\ell)(k-\ell-1)-\lambda\big)\mathcal{C}\left(R_{2-2k}^{k-\ell-2}F,Q\right), \qquad \ell \leq k-2, \\
		\mathcal{C}\left(L_{2-2k}^{-k-\ell+2}F,Q\right) &= \big((k+\ell)(k-\ell-1)-\lambda\big)\mathcal{C}\left(L_{2-2k}^{-k-\ell}F,Q\right), \qquad\,\,\, \ell \leq -k,
		\end{align*}
		where $L_{2-2k}^{n}$ and $R_{2-2k}^{n}$ are iterated versions of the lowering and raising operators, see Section~\ref{section weak Maass forms}.
	\end{Theorem}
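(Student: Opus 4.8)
The plan is to reduce all of these identities to a few elementary relations holding pointwise along the geodesic, together with a single integration by parts. First I would fix $Q=[a,b,c]$ with $a>0$ and parametrize $C_{Q}$ by $z(\theta)=\tfrac{1}{2a}\bigl(-b+\sqrt{D}\,e^{i\theta}\bigr)$ for $\theta\in(0,\pi)$. A short computation then gives the three pointwise identities on $C_{Q}$
\begin{align*}
\overline{Q(z,1)}\,dz=Q(z,1)\,d\bar z,\qquad |Q(z,1)|^{2}=D\,y^{2},\qquad Q(z,1)=iy\,Q'(z),
\end{align*}
where $Q'(z)=2az+b=\partial_z Q(z,1)$; the last one is just the defining equation $a|z|^{2}+bx+c=0$ of $C_{Q}$ rewritten. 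These three relations carry essentially the entire argument.

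For the first chain of equalities I would treat the two equalities separately. The identity $\mathcal{C}(L_{2-2k}F,Q)=\overline{\mathcal{C}(\xi_{2-2k}F,Q)}$ is purely algebraic: using $\overline{\xi_{2-2k}F}=y^{-2k}L_{2-2k}F$ and $\overline{dz}=d\bar z$, the first two pointwise relations convert $y^{-2k}\,\overline{Q(z,1)}^{\,k-1}\,d\bar z$ into $D^{k}\,Q(z,1)^{-k-1}\,dz$, and the extra factor $D^{k}$ upgrades the normalization $D^{(1-k)/2}$ of $\mathcal{C}(\xi_{2-2k}F,Q)$ to the factor $D^{(1+k)/2}$ belonging to $\mathcal{C}(L_{2-2k}F,Q)$, so no integration is needed. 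For $\mathcal{C}(R_{2-2k}F,Q)=\mathcal{C}(L_{2-2k}F,Q)$ I would first use the same two relations to rewrite the $L$-integral as $-2i\int_{C_{Q}}(\partial_{\bar z}F)\,Q(z,1)^{1-k}\,d\bar z$, after which the claimed equality collapses to
\begin{align*}
2i\int_{C_{Q}}Q(z,1)^{1-k}\,dF+(2-2k)\int_{C_{Q}}y^{-1}F\,Q(z,1)^{1-k}\,dz=0.
\end{align*}
Integrating the first term by parts along the closed geodesic $\Gamma_{Q}\backslash C_{Q}$ (which has no boundary, so the total-derivative term drops out) replaces it by $2i(k-1)\int_{C_{Q}}F\,Q(z,1)^{-k}Q'(z)\,dz$, and the two integrals combine into $2(k-1)\int_{C_{Q}}F\,Q(z,1)^{-k}\bigl(iQ'(z)-y^{-1}Q(z,1)\bigr)\,dz$, whose integrand vanishes identically by the third pointwise relation. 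The degenerate case $k=1$ is immediate, since both terms vanish separately.

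For the eigenvalue recursions I would combine the first part with the standard commutation relations
\begin{align*}
L_{w+2}R_{w}=-\Delta_{w}-w,\qquad R_{w-2}L_{w}=-\Delta_{w},\qquad R_{w}L_{w+2}=-\Delta_{w+2},
\end{align*}
and the intertwining relations $\Delta_{w+2}R_{w}=R_{w}(\Delta_{w}+w)$ and $\Delta_{w-2}L_{w}=L_{w}(\Delta_{w}-w+2)$, all of which follow directly from the definitions. Iterating the intertwining relations shows that $R_{2-2k}^{m}F$ has eigenvalue $\lambda+m(m+1-2k)$ and $L_{2-2k}^{m}F$ has eigenvalue $\lambda+m(m+2k-1)$. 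For the raising recursion, the first part applied in weight $-2\ell$ gives $\mathcal{C}(R_{2-2k}^{k-\ell}F,Q)=\mathcal{C}\bigl(L_{-2\ell}R_{2-2k}^{k-\ell-1}F,Q\bigr)$; writing out the last raising step and applying $L_{-2\ell}R_{-2\ell-2}=-\Delta_{-2\ell-2}+2\ell+2$ to the eigenform $R_{2-2k}^{k-\ell-2}F$ pulls out the scalar $-\bigl(\lambda+(k-\ell-2)(-k-\ell-1)\bigr)+2\ell+2$, which a one-line computation identifies with $(k+\ell)(k-\ell-1)-\lambda$. The lowering recursion is entirely parallel: the first part turns $\mathcal{C}(L_{2-2k}^{-k-\ell+2}F,Q)$ into $\mathcal{C}\bigl(R_{2\ell}L_{2-2k}^{-k-\ell+1}F,Q\bigr)$, and the cleaner relation $R_{2\ell}L_{2\ell+2}=-\Delta_{2\ell+2}$ reduces it to $-\bigl(\lambda+(-k-\ell)(k-\ell-1)\bigr)\mathcal{C}(L_{2-2k}^{-k-\ell}F,Q)$, with the scalar again equal to $(k+\ell)(k-\ell-1)-\lambda$.

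I expect the only genuine difficulty to be bookkeeping rather than ideas: tracking the weight at each stage so the correct $R_{w}$, $L_{w}$, $\Delta_{w}$ is applied, tracking the eigenvalue shifts under iterated raising and lowering, and tracking the powers of $D$ introduced by the normalization $D^{(1-k)/2}$ in the definition of $\mathcal{C}$. The geometric input is elementary once the parametrization of $C_{Q}$ is fixed, and the crux of the whole argument is the pointwise relation $Q(z,1)=iy\,Q'(z)$ on $C_{Q}$, which is exactly what makes the single integration by parts close up; after that, everything is a matter of verifying that the scalars produced by the commutation relations really are $(k+\ell)(k-\ell-1)-\lambda$ in both recursions.
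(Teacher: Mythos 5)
Your argument is correct, and for the central identity $\mathcal{C}(L_{2-2k}F,Q)=\mathcal{C}(R_{2-2k}F,Q)$ it takes a genuinely different (though ultimately equivalent) route from the paper. The paper conjugates by the matrix $\sigma$ that straightens $S_{Q}$ to a segment of the imaginary axis, where the combination $(R_{2-2k}F_{\sigma})(iy)y^{1-k}+(L_{2-2k}F_{\sigma})(iy)y^{-k-1}$ is visibly the total derivative $2\frac{\partial}{\partial y}\bigl(F_{\sigma}(iy)y^{1-k}\bigr)$, and the boundary contribution dies because $F_{\sigma}$ is invariant under $\left(\begin{smallmatrix}\varepsilon&0\\0&\varepsilon^{-1}\end{smallmatrix}\right)$ in weight $2-2k$. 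You instead stay on the semicircle and isolate the geometry in the three pointwise relations, above all $Q(z,1)=iy\,Q'(z)$, reducing everything to one integration by parts; I checked the $D$-power bookkeeping ($y^{-2k}\overline{Q}^{k-1}d\bar z=D^{k}Q^{-k-1}dz$ and $y^{2}Q^{-k-1}dz=D^{-1}Q^{1-k}d\bar z$) and the scalars in both recursions, and they all come out right. What your version buys is that it never introduces $\sigma$ and makes explicit exactly which feature of the geodesic (its defining equation, rewritten as $Q=iyQ'$) makes the argument close up; what the paper's version buys is that the vanishing of the boundary term is completely explicit. On that last point you are slightly glib: ``the closed geodesic has no boundary'' is not by itself enough --- you also need the integrand $F(z)Q(z,1)^{1-k}$ to be $\Gamma_{Q}$-invariant so that it descends to the closed cycle $C_{Q}=\Gamma_{Q}\backslash S_{Q}$ and the endpoint contributions cancel. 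This follows immediately from the weight $2-2k$ modularity of $F$ together with $Q(Mz,1)=(cz+d)^{-2}Q(z,1)$ for $M\in\Gamma_{Q}$, and it is precisely the $\varepsilon$-invariance the paper invokes, but it should be stated. The treatment of the two eigenvalue recursions is essentially identical to the paper's, except that you track the eigenvalue of the intermediate forms via single-step intertwining relations where the paper pushes $\Delta$ all the way back to $F$ with its Lemma~\ref{lemma delta}; these are equivalent.
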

	
	Using the first relation of Theorem~\ref{theorem identities} and then repeatedly applying the second and third one, we obtain the following formulas.
	
	\begin{Corollary}
			Let $F$ be a harmonic weak Maass form of weight $2-2k$ for $\Gamma$. For $k \geq 1$ and all integers $1 \leq j \leq k$ we have the identity
			\begin{align*}
			\mathcal{C}\left(R_{2-2k}^{2j-1}F,Q\right) = 
			\frac{(j-1)!(k-j)!(2k-2)!}{(k-1)!(2k-2j)!}\overline{\mathcal{C}\left(\xi_{2-2k}F, Q\right)}.
			\end{align*}
			Furthermore, for $k \leq 0$ and all integers $0 \leq j \leq |k|$ we have the identity
			\begin{align*}
			\mathcal{C}\left(L_{2-2k}^{2j+1}F,Q\right) = \frac{(2j)!|k|!}{j!(|k|-j)!}\overline{\mathcal{C}\left(\xi_{2-2k}F, Q\right)}. 
			\end{align*}
		\end{Corollary}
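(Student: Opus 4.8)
The plan is to derive both identities from Theorem~\ref{theorem identities} by a short downward induction, the only genuine content being a combinatorial simplification of the accumulated products into the stated factorial ratios. Since a harmonic weak Maass form $F$ of weight $2-2k$ is annihilated by the weight-$(2-2k)$ Laplacian, it is in particular a weak Maass form with eigenvalue $\lambda = 0$, so every factor $(k+\ell)(k-\ell-1)-\lambda$ occurring in Theorem~\ref{theorem identities} reduces to $(k+\ell)(k-\ell-1)$.

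For the first formula ($k\geq 1$) I would take as base case $j=1$, which is precisely the relation $\mathcal{C}(R_{2-2k}F,Q)=\overline{\mathcal{C}(\xi_{2-2k}F,Q)}$ of Theorem~\ref{theorem identities}, noting that the claimed coefficient equals $1$ when $j=1$. Next I would re-express the second relation of the theorem in terms of the odd exponent $n=2j-1$: the substitution $\ell=k-n$ gives $k+\ell=2k-n$ and $k-\ell-1=n-1$, turning it into
\begin{align*}
\mathcal{C}\left(R_{2-2k}^{n}F,Q\right)=(2k-n)(n-1)\,\mathcal{C}\left(R_{2-2k}^{n-2}F,Q\right),\qquad n\geq 2,
\end{align*}
where the condition $n\geq 2$ is exactly the original restriction $\ell\leq k-2$. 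With $n=2j-1$ this reads $\mathcal{C}(R_{2-2k}^{2j-1}F,Q)=(2k-2j+1)(2j-2)\,\mathcal{C}(R_{2-2k}^{2j-3}F,Q)$ for $j\geq 2$, and iterating down to $j=1$ yields $\mathcal{C}(R_{2-2k}^{2j-1}F,Q)=\overline{\mathcal{C}(\xi_{2-2k}F,Q)}\prod_{i=2}^{j}(2k-2i+1)(2i-2)$. The heart of the argument is then to show that this product equals the stated ratio. I would split it into even and odd factors: the even factors give $\prod_{i=2}^{j}(2i-2)=2^{j-1}(j-1)!$, while the odd factors $\prod_{i=2}^{j}(2k-2i+1)$ are exactly the odd integers lying in $[2k-2j+1,\,2k-2]$. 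Since $\frac{(2k-2)!}{(2k-2j)!}$ is the product of all integers in that range and its even part is $\prod_{i=1}^{j-1}(2k-2i)=2^{j-1}\frac{(k-1)!}{(k-j)!}$, solving for the odd part and recombining produces the coefficient $\frac{(j-1)!(k-j)!(2k-2)!}{(k-1)!(2k-2j)!}$.

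The second formula ($k\leq 0$) is treated identically, with the third relation of Theorem~\ref{theorem identities} in place of the second. Here the base case is $j=0$, namely $\mathcal{C}(L_{2-2k}F,Q)=\overline{\mathcal{C}(\xi_{2-2k}F,Q)}$, and writing $k=-|k|$ I would set the exponent $n=-k-\ell+2$ to rewrite the relation as $\mathcal{C}(L_{2-2k}^{n}F,Q)=(2-n)(2k+n-3)\,\mathcal{C}(L_{2-2k}^{n-2}F,Q)$ for $n\geq 2$ (again matching $\ell\leq -k$). Specializing to $n=2j+1$ gives, after extracting signs, $\mathcal{C}(L_{2-2k}^{2j+1}F,Q)=2(2j-1)(|k|+1-j)\,\mathcal{C}(L_{2-2k}^{2j-1}F,Q)$ for $j\geq 1$; iterating and using $\prod_{i=1}^{j}(2i-1)=\frac{(2j)!}{2^{j}j!}$ together with $\prod_{i=1}^{j}(|k|+1-i)=\frac{|k|!}{(|k|-j)!}$ collapses the product $\prod_{i=1}^{j}2(2i-1)(|k|+1-i)$ to the claimed $\frac{(2j)!|k|!}{j!(|k|-j)!}$.

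I expect the only delicate part to be the bookkeeping rather than any conceptual difficulty: one must check that the index restrictions $\ell\leq k-2$ and $\ell\leq -k$ of Theorem~\ref{theorem identities} translate precisely to the ranges of $j$ used in the two inductions, that no vanishing or degenerate factor interrupts the iteration, and that the splitting of the telescoping product into its even and odd parts is carried out correctly. All of these are routine once the reindexing above is in place.
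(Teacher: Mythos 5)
Your proposal is correct and follows exactly the route the paper intends: the paper's entire ``proof'' of the corollary is the one-sentence remark that one uses the first relation of Theorem~\ref{theorem identities} as the base case and then repeatedly applies the second and third relations (with $\lambda=0$ since $F$ is harmonic), which is precisely your induction; your reindexing, the telescoping products, and the even/odd factorization all check out and correctly reproduce the stated coefficients.
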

	
	If we apply the first identity in the corollary with $j = k$ and use Bol's identity \eqref{eq Bol}, we recover the identity \eqref{eq bgk formula} of Bringmann, Guerzhoy and Kane.
	
	We start with a section on preliminaries about cycle integrals and weak Maass forms. In Section~\ref{section proof}, we prove Theorem~\ref{theorem identities}.

	\section{Cycle Integrals and Weak Maass Forms}\label{section preliminaries}
	
	\subsection{Cycle integrals}\label{section cycle integrals} 
	Let $Q(X,Y) = aX^{2}+bXY+cY^{2}$ be an integral binary quadratic form of non-square discriminant $D = b^{2}-4ac > 0$, and let $\Gamma_{Q}$ be the stabilizer of $Q$ in $\Gamma$. Associated to $Q$ is the semi-circle $S_{Q}$ given by all $z = x+iy \in \H$ satisfying $a|z|^{2}+bx+c = 0$, which we orient counterclockwise if $a > 0$. Let $C_{Q} =\Gamma_{Q}\backslash S_{Q}$ be the associated geodesic in the modular curve $\Gamma \backslash \H$.
	
	We now give an explicit parametrization of the cycle integral in \eqref{eq cycle integral}. Since the cycle integral only depends on the class of $Q$ mod $\Gamma$ and every class contains a quadratic form with positive $a$-entry, we can assume that $a > 0$. Then the two real endpoints $w < w'$ of $S_{Q}$ are given by $w = \frac{-b-\sqrt{D}}{2a}$ and $w' = \frac{-b+\sqrt{D}}{2a}$. The matrix
	\begin{align*}
	\sigma = \frac{a^{\frac{1}{2}}}{D^{\frac{1}{4}}}\begin{pmatrix}w' & w \\ 1 & 1 \end{pmatrix} \in \SL_{2}(\R)
	\end{align*}
	maps $0$ to $w$ and $\infty$ to $w'$, and hence maps the positive imaginary axis (oriented from $i\infty$ to $0$) to $S_{Q}$ (oriented counterclockwise). Furthermore, we have $Q \circ \sigma = [0,-\sqrt{D},0]$. The stabilizer of $[0,-\sqrt{D},0]$ in $\sigma^{-1} \Gamma \sigma$ is generated by the two matrices $\pm\left(\begin{smallmatrix}\varepsilon & 0 \\ 0 & \varepsilon^{-1}\end{smallmatrix}\right)$ for a suitable $\varepsilon > 1$.
	
	\begin{Lemma}\label{lemma parametrization}
		Let $F: \H \to \C$ be a smooth function which transforms like a modular form of weight $2k \in 2\Z$ for $\Gamma$. Then the cycle integral of $F$ along $C_{Q}$ is given by
		\begin{align*}
		\mathcal{C}(F,Q) = (-i)^{k}\int_{1}^{\varepsilon^{2}}F_{\sigma}(iy)y^{k-1}dy,
		\end{align*}
		where $F_{\sigma}= F|_{2k}\sigma$ with the usual weight $2k$ slash operator $|_{2k}$ of $\SL_{2}(\R)$.
		\end{Lemma}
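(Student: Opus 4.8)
The plan is to pull the cycle integral back to the positive imaginary axis by means of the substitution $z = \sigma w$, exploiting the three properties of $\sigma$ recorded above: that $\sigma$ maps the imaginary axis to $S_Q$, that $Q \circ \sigma = [0,-\sqrt{D},0]$, and that $\sigma^{-1}\Gamma_Q\sigma$ is generated by $\pm\left(\begin{smallmatrix}\varepsilon & 0 \\ 0 & \varepsilon^{-1}\end{smallmatrix}\right)$. Writing $\sigma = \left(\begin{smallmatrix} p & q \\ r & s\end{smallmatrix}\right)$ with $ps - qr = 1$, the first step is to record the transformation behaviour of the three factors in the integrand. From $\det\sigma = 1$ one gets $dz = (rw+s)^{-2}\,dw$; from the definition of the slash operator one has $F(\sigma w) = (rw+s)^{2k}F_\sigma(w)$; and since for any $\sigma \in \SL_2(\R)$ one has $(Q\circ\sigma)(w,1) = (rw+s)^2 Q(\sigma w,1)$, the identity $Q\circ\sigma = [0,-\sqrt{D},0]$ yields $Q(\sigma w, 1) = -\sqrt{D}\,w\,(rw+s)^{-2}$.

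The key observation is then that the automorphy factors cancel completely: multiplying the three rules gives
\begin{align*}
F(\sigma w)\,Q(\sigma w,1)^{k-1}\,dz = F_\sigma(w)\,(-\sqrt{D})^{k-1}\,w^{k-1}\,dw,
\end{align*}
since the exponents of $(rw+s)$ add up to $2k - 2(k-1) - 2 = 0$. (This cancellation is of course just the statement that $F(z)Q(z,1)^{k-1}\,dz$ is a $\Gamma$-invariant $1$-form, which is what makes the cycle integral well defined in the first place.) Next I would identify the domain of integration: since $\sigma^{-1}\Gamma_Q\sigma$ acts on the imaginary axis through $w \mapsto \varepsilon^2 w$, the geodesic $C_Q = \Gamma_Q\backslash S_Q$ corresponds under $z = \sigma w$ to the segment $\{iy : 1 \le y \le \varepsilon^2\}$, a fundamental domain for this action. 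Substituting $w = iy$, so that $dw = i\,dy$ and $w^{k-1} = i^{k-1}y^{k-1}$, turns the integral into a constant times $\int F_\sigma(iy)\,y^{k-1}\,dy$.

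Finally I would assemble the constants and fix the sign. The scalar $D^{\frac{1-k}{2}}$ from the definition of $\mathcal{C}(F,Q)$ combines with $(-\sqrt{D})^{k-1}$ to give $(-1)^{k-1}$, while the substitution contributes $i^{k-1}\cdot i = i^k$. The one remaining point needing care is the orientation: because $\sigma$ sends the imaginary axis \emph{oriented from $i\infty$ to $0$}, i.e. with $y$ decreasing, to $S_Q$ oriented counterclockwise, the positively oriented loop $C_Q$ pulls back to the fundamental segment traversed from $y = \varepsilon^2$ down to $y = 1$. This reversal supplies one additional factor of $-1$, and combining everything gives
\begin{align*}
\mathcal{C}(F,Q) = (-1)^{k-1}\cdot(-1)\cdot i^k \int_1^{\varepsilon^2}F_\sigma(iy)\,y^{k-1}\,dy = (-i)^k\int_1^{\varepsilon^2}F_\sigma(iy)\,y^{k-1}\,dy,
\end{align*}
as claimed. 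I expect the only genuinely delicate point to be this orientation bookkeeping — getting the direction of traversal, and hence the sign of the integral, consistent with the counterclockwise orientation of $S_Q$ — since the cancellation of automorphy factors and the evaluation of the constants are routine once the transformation rules are in place.
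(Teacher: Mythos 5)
Your proof is correct and follows essentially the same route as the paper: substitute $z=\sigma w$, observe that the automorphy factors of $F$, $Q(z,1)^{k-1}$ and $dz$ cancel, use $Q\circ\sigma=[0,-\sqrt{D},0]$ and the generator $\left(\begin{smallmatrix}\varepsilon & 0\\ 0 & \varepsilon^{-1}\end{smallmatrix}\right)$ of the stabilizer to reduce to the segment from $i\varepsilon^{2}$ to $i$, and track the constants. The paper simply states this computation in one line (parametrizing with $y$ running from $\varepsilon^{2}$ to $1$, which absorbs your extra sign from the orientation reversal); your sign and constant bookkeeping, including $D^{\frac{1-k}{2}}(-\sqrt{D})^{k-1}=(-1)^{k-1}$ and the factor $i^{k}$, checks out.
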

	
	\begin{proof}
		We can parametrize $C_{Q}$ by $y \mapsto \sigma iy$ with $y$ running from $\varepsilon^{2}$ to $1$. Using that $Q(Miy,1) = (ciy+d)^{-2}(Q\circ M)(iy,1)$ and $\frac{d}{dy}Miy = i(ciy+d)^{-2}$ for every matrix $M = \left(\begin{smallmatrix}a & b \\ c & d\end{smallmatrix} \right) \in \SL_{2}(\R)$, we easily obtain the stated formula.
	\end{proof}
	
	\subsection{Weak Maass Forms} \label{section weak Maass forms}
	A weak Maass form of weight $2\kappa \in 2\Z$ for $\Gamma$ is a smooth function $F:\H \to \C$ which is an eigenform of the weight $2\kappa$ Laplace operator
	\begin{align*}
	\Delta_{2\kappa} = -y^{2}\left(\frac{\partial^{2}}{\partial x^{2}}+\frac{\partial}{\partial y^{2}} \right)+2\kappa i y \left(\frac{\partial}{\partial x}+i\frac{\partial}{\partial y} \right),
	\end{align*}
	which transforms like a modular form of weight $2\kappa$ for $\Gamma$ and which has at most linear exponential growth at the cusp. The Maass lowering and raising operators
	\begin{align*}
	L_{2\kappa} = -2iy^{2}\frac{\partial}{\partial \overline{z}}, \qquad R_{2k} = 2i\frac{\partial}{\partial z}+2\kappa y^{-1},
	\end{align*}
	map weak Maass forms of weight $2\kappa$ with eigenvalue $\lambda$ to weak Maass forms of weight $2\kappa-2$ with eigenvalue $\lambda-2\kappa+2$ and weight $2\kappa+2$ with eigenvalue $\lambda+2\kappa$, respectively. We write $L_{2\kappa}^{n} = L_{2\kappa-2n+2}\circ \ldots\circ L_{2\kappa}$ and $R_{2\kappa}^{n} = R_{2\kappa+2n-2}\circ\ldots\circ R_{2\kappa}$ for $n \in \N_{0}$ (with $L_{2\kappa}^{0} = R_{2\kappa}^{0} = \id$) for their iterated versions, which lower or raise the weight by $2n$. The antilinear differential operator
	\begin{align*}
	\xi_{2\kappa}F = y^{2\kappa - 2}\overline{L_{2\kappa}F} = R_{-2\kappa}y^{2\kappa}\overline{F} = 2iy^{2\kappa}\overline{\frac{\partial}{\partial \overline{z}}F} 
	\end{align*}
	maps weak Maass forms of weight $2\kappa$ with eigenvalue $\lambda$ to weak Maass forms of weight $2-2\kappa$ with eigenvalue $\overline{\lambda}$. Furthermore, it defines a surjective map from the space of harmonic weak Maass forms of weight $2\kappa$ to the space of weakly holomorphic modular forms of weight $2-2\kappa$, see \cite{bf}, Theorem~3.7. The above differential operators are related by
	\begin{align}\label{eq delta relation}
	-\Delta_{2\kappa} = \xi_{2-2\kappa}\xi_{2\kappa} = L_{2\kappa+2}R_{2\kappa} + 2\kappa = R_{2\kappa-2} L_{2\kappa}.
	\end{align}	
	Using the relation \eqref{eq delta relation}, the following lemma is easy to prove by induction.
	
	\begin{Lemma}\label{lemma delta}
		For $\kappa,\ell \in \Z$ we have
		\begin{align*}
		\Delta_{2\ell}R_{2\kappa}^{-\kappa+\ell} &= R_{2\kappa}^{-\kappa+\ell}(\Delta_{2\kappa}-(\kappa-\ell)(\kappa+\ell-1)),\qquad \ell \geq \kappa, \\
		\Delta_{2\ell}L_{2\kappa}^{\kappa-\ell} &= L_{2\kappa}^{\kappa-\ell}(\Delta_{2\kappa}-(\kappa-\ell)(\kappa+\ell-1)), \qquad\,\,\,\, \ell \leq \kappa.
		\end{align*}
	\end{Lemma}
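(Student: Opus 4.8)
The plan is to reduce both statements to single-step commutation relations between the Laplacian and the raising/lowering operators, and then to argue by induction on the number of iterations. Starting from the factorizations $-\Delta_{2\kappa} = L_{2\kappa+2}R_{2\kappa}+2\kappa = R_{2\kappa-2}L_{2\kappa}$ in \eqref{eq delta relation}, I would first establish the two elementary identities
\begin{align*}
\Delta_{2\ell+2}R_{2\ell} = R_{2\ell}(\Delta_{2\ell}+2\ell), \qquad \Delta_{2\ell-2}L_{2\ell} = L_{2\ell}(\Delta_{2\ell}-2\ell+2).
\end{align*}
For the first, I would apply the factorization of $\Delta_{2\ell+2}$ on the left and of $\Delta_{2\ell}$ on the right to compute $-\Delta_{2\ell+2}R_{2\ell} = R_{2\ell}L_{2\ell+2}R_{2\ell} = R_{2\ell}(-\Delta_{2\ell}-2\ell)$; the second follows by the symmetric computation $-\Delta_{2\ell-2}L_{2\ell} = (L_{2\ell}R_{2\ell-2}+2\ell-2)L_{2\ell} = L_{2\ell}(-\Delta_{2\ell}+2\ell-2)$.

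For the raising statement I would induct on $n = \ell - \kappa \geq 0$. The base case $\ell = \kappa$ is immediate, since $R_{2\kappa}^{0} = \id$ and the scalar $(\kappa-\ell)(\kappa+\ell-1)$ vanishes. For the inductive step I write $R_{2\kappa}^{-\kappa+\ell+1} = R_{2\ell}\,R_{2\kappa}^{-\kappa+\ell}$, use the single-step identity to commute $\Delta_{2\ell+2}$ through $R_{2\ell}$, and then invoke the induction hypothesis to commute the resulting $\Delta_{2\ell}$ through $R_{2\kappa}^{-\kappa+\ell}$. This leaves the operator $R_{2\kappa}^{-\kappa+\ell+1}$ acting on $\Delta_{2\kappa}$ plus a scalar, and it remains only to check that the accumulated scalar matches the claimed shift, i.e.
\begin{align*}
-(\kappa-\ell)(\kappa+\ell-1)+2\ell = -(\kappa-\ell-1)(\kappa+\ell),
\end{align*}
which is a one-line expansion. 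The lowering statement is entirely analogous: I would induct on $\kappa-\ell \geq 0$, split off $L_{2\kappa}^{\kappa-\ell+1} = L_{2\ell}\,L_{2\kappa}^{\kappa-\ell}$, apply the second single-step identity, and verify the scalar identity $-(\kappa-\ell)(\kappa+\ell-1)-(2\ell-2) = -(\kappa-\ell+1)(\kappa+\ell-2)$.

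There is no serious obstacle here; the content is purely the bookkeeping of weights together with the two scalar identities. The one point requiring care is tracking which weight the outermost operator carries — that the top factor of $R_{2\kappa}^{-\kappa+\ell+1}$ is $R_{2\ell}$ and of $L_{2\kappa}^{\kappa-\ell+1}$ is $L_{2\ell}$ — so that each single-step relation is applied at exactly the right weight and the telescoping of the linear shifts $2\ell$ (respectively $2\ell-2$) correctly reassembles the quadratic $(\kappa-\ell)(\kappa+\ell-1)$.
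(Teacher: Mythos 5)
Your proposal is correct, and it is exactly the induction the paper has in mind: the paper simply states that the lemma "is easy to prove by induction" from the relation \eqref{eq delta relation}, and your single-step commutation identities together with the scalar checks $-(\kappa-\ell)(\kappa+\ell-1)+2\ell=-(\kappa-\ell-1)(\kappa+\ell)$ and $-(\kappa-\ell)(\kappa+\ell-1)-(2\ell-2)=-(\kappa-\ell+1)(\kappa+\ell-2)$ supply precisely the omitted details. No gaps.
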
 
	
	If $\kappa \leq 0$, then the raising operator and the differential operator $\mathcal{D}= \frac{1}{2\pi i}\frac{\partial}{\partial z}$ are related by Bol's identity
	\begin{align}\label{eq Bol}
	\mathcal{D}^{1-2\kappa} = \frac{1}{(-4\pi)^{1-2\kappa}}R_{2\kappa}^{1-2\kappa}.
	\end{align}
	The operator $\mathcal{D}^{1-2\kappa}$ maps harmonic weak Maass forms of weight $2\kappa$ to weakly holomorphic modular forms of weight $2-2\kappa$, see \cite{bor}, Theorem~1.1.
	
	\section{Proof of Theorem~\ref{theorem identities}}\label{section proof}

	The equality $\mathcal{C}\left(L_{2-2k}F,Q \right)= \overline{\mathcal{C}\left(\xi_{2-2k}F,Q \right)}$ becomes obvious if we use the parametrization of the cycle integrals given in Lemma~\ref{lemma parametrization} and write $\xi_{2-2k} = y^{-2k}\overline{L_{2-2k}}$.
	
		Using Lemma~\ref{lemma parametrization}, we see that $\mathcal{C}\left(L_{2-2k}F,Q \right) = \mathcal{C}\left(R_{2-2k}F,Q\right)$ is equivalent to
		\begin{align*}
		\int_{1}^{\varepsilon^{2}}\left(L_{2-2k}F_{\sigma}\right)(iy)y^{-k-1}dy = -\int_{1}^{\varepsilon^{2}}\left(R_{2-2k}F_{\sigma}\right)(iy)y^{1-k}dy.
		\end{align*}
		Here we also used that the lowering and raising operators commute with the slash action of $\SL_{2}(\R)$. A short calculation shows that
		\begin{align*}
		\left(R_{2-2k}F_{\sigma}\right)(iy)y^{1-k} + \left(L_{2-2k}F_{\sigma}\right)(iy)y^{-k-1} = 2\frac{\partial}{\partial y}\left(F_{\sigma}(iy)y^{1-k}\right).
		\end{align*}
		The integral
		\begin{align*}
		\int_{1}^{\varepsilon^{2}}\frac{\partial}{\partial y}\left(F_{\sigma}(iy)y^{1-k} \right)dy = F_{\sigma}(i\varepsilon^{2})\varepsilon^{2-2k} - F_{\sigma} (i) = \left(F_{\sigma}\big|_{2-2k}\left(\begin{smallmatrix}\varepsilon & 0 \\ 0 &\varepsilon^{-1}\end{smallmatrix} \right)\right)(i) - F_{\sigma} (i)
		\end{align*}
		vanishes since $F_{\sigma}$ transforms like a modular form of weight $2-2k$ for $\sigma^{-1} \Gamma \sigma$. This yields the first identity in Theorem~\ref{theorem identities}.
				
		The second and the third identity in Theorem~\ref{theorem identities} easily follow from the first one applied to $R_{2-2k}^{k-\ell-1}F$ and $L_{2-2k}^{-k-\ell+1}F$. For example, for the third identity we obtain
		\begin{align*}
		\mathcal{C}\left(L_{2-2k}^{-k-\ell+2}F,Q\right) = \mathcal{C}\left(L_{2\ell}L_{2-2k}^{-k-\ell+1}F,Q \right) = \mathcal{C}\left(R_{2\ell}L_{2-2k}^{-k-\ell+1}F ,Q\right) 		\end{align*}
		from the first identity in Theorem~\ref{theorem identities}. Now, using \eqref{eq delta relation}, we further compute
		\begin{align*}
		 \mathcal{C}\left(R_{2\ell}L_{2-2k}^{-k-\ell+1}F ,Q\right)= \mathcal{C}\left(R_{2\ell}L_{2\ell+2}L_{2-2k}^{-k-\ell}F ,Q\right) = \mathcal{C}\left(-\Delta_{2\ell+2}L_{2-2k}^{-k-\ell}F ,Q\right),
		\end{align*}
		and then, using Lemma~\ref{lemma delta},
		\begin{align*}
		\mathcal{C}\left(-\Delta_{2\ell+2}L_{2-2k}^{-k-\ell}F ,Q\right)&= \mathcal{C}\left(-L_{2-2k}^{-k-\ell}(\Delta_{2-2k}-(k+\ell)(k-\ell-1))F,Q \right)\\& = \big((k+\ell)(k-\ell-1)-\lambda\big)\mathcal{C}\left(L_{2-2k}^{-k-\ell}F,Q \right).
		\end{align*}
		The computation for the second identity in the theorem is similar, so we leave it to the reader. This finishes the proof of Theorem~\ref{theorem identities}.

\bibliography{references}{}
\bibliographystyle{plain}

\end{document}